\documentclass[11pt]{amsart}
\usepackage[margin=1in]{geometry}

\usepackage{latexsym,amsfonts,amssymb,epsfig,verbatim}
\usepackage{amsmath, graphics, textcomp}
\usepackage{amsthm}
\usepackage{pinlabel}
\usepackage{enumitem}

\usepackage{tikz}

\usepackage[all,2cell]{xy}

\newcommand{\nc}{\newcommand}
\nc{\dmo}{\DeclareMathOperator}
\nc{\nt}{\newtheorem}

\theoremstyle{plain}
\nt{theorem}{Theorem}[section]
\nt{lemma}[theorem]{Lemma}
\nt{prop}[theorem]{Proposition}
\nt{proposition}[theorem]{Proposition}
\nt{question}[theorem]{Question}
\nt{cor}[theorem]{Corollary}
\nt{corollary}[theorem]{Corollary}
\nt{fact}[theorem]{Fact}
\nt{problem}[theorem]{Problem}
\nt{conjecture}[theorem]{Conjecture}

\newtheorem*{problem29}{Problem 2.9 (Restated)}
\newtheorem*{remark*}{Remark}

\theoremstyle{definition}
\nt{definition}[theorem]{Definition}
\nt{remark}[theorem]{Remark}
\nt{rmk}[theorem]{Remark}
\nt{example}[theorem]{Example}
\nt{exercise}[theorem]{Exercise}

\nc{\C}{\mathbb{C}}
\nc{\Q}{\mathbb{Q}}
\nc{\R}{\mathbb{R}}
\nc{\Z}{\mathbb{Z}}

\nc{\cA}{\mathcal{A}}
\nc{\cB}{\mathcal{B}}
\nc{\cC}{\mathcal{C}}
\nc{\cH}{\mathcal{H}}
\nc{\cI}{\mathcal{I}}
\nc{\cL}{\mathcal{L}}
\nc{\cR}{\mathcal{R}}
\nc{\cS}{\mathcal{S}}

\nc{\al}{\alpha}
\nc{\be}{\beta}
\nc{\de}{\delta}
\nc{\del}{\partial}
\nc{\ep}{\epsilon}
\nc{\ga}{\gamma}
\nc{\ka}{\kappa}
\nc{\la}{\lambda}
\nc{\om}{\omega}
\nc{\si}{\sigma}

\nc{\De}{\Delta}
\nc{\Ga}{\Gamma}
\nc{\GD}{\Delta}
\nc{\La}{\Lambda}
\nc{\Si}{\Sigma}

\nc{\Aut}{\mathrm{Aut}}
\nc{\Bl}{\mathrm{Bl}}
\nc{\Bu}{\mathrm{Bu}}
\nc{\bu}{\mathrm{bu}}
\nc{\Div}{\mathrm{Div}}
\nc{\ev}{\mathrm{ev}}
\nc{\GL}{\mathrm{GL}}
\nc{\Hom}{\mathrm{Hom}}
\nc{\id}{\mathrm{id}}
\nc{\im}{\mathrm{im}\,}
\nc{\law}{\mathrm{law}}
\nc{\lk}{\mathrm{lk}}
\nc{\ord}{\mathrm{ord}}
\nc{\rank}{\mathrm{rank}}
\nc{\SL}{\mathrm{SL}}
\nc{\Stab}{\mathrm{Stab}}
\nc{\Tr}{\mathrm{Tr}}
\nc{\tr}{\mathrm{tr}}
\nc{\Tree}{\mathrm{Tree}}
\nc{\wri}{\mathrm{wr}}
\nc{\UC}{\mathrm{UC}}

\nc{\ol}{\overline}
\nc{\ti}{\tilde}
\nc{\wh}{\widehat}

\nc{\x}{\times}
\nc{\ox}{\otimes}
\nc{\inn}[1]{\langle{#1}\rangle}
\nc{\pdiff}[2]{\frac{\del #1}{\del #2}}

\nc{\lto}{\longrightarrow}
\nc{\lmto}{\longmapsto}
\nc{\mto}{\mapsto}

\nc{\margin}[1]{\marginpar{\tiny #1}}

\nc{\p}[1]{\smallskip\noindent{{\bf #1}}}

\begin{document}


\title{Quasipositive Braids, Presentations, and Garside Structures}

\author{Jacob Guynee}





\begin{abstract}
We provide a presentation of the braid group in which the generating set is the set of all positive half-twists, the relations are balanced positive relations, and every relation is conjugate to one in a finite list. We then prove that for any such presentation, its positive monoid is not isomorphic to the quasipositive braid monoid. Finally, we show that the quasipositive braid monoid has no proper Garside family. These results settle a problem of Calegari from the K3 problems list.
\end{abstract}

\maketitle
\section{Introduction}

This paper addresses the following problem from the K3 problems list \cite{K3}.
\begin{quote}
\noindent\textsc{Problem 2.9.}
\textit{Is there a nice presentation of the \(n\)-stranded braid group whose
generating set is the set of all positive elementary braid half-twists?}

\medskip

\noindent\textsc{Remarks.}
\begin{enumerate}
    \item[(1)] The adjective ``nice'' here should at least mean that all
    relations consist of equating two words in the positive generators
    of the same length, and perhaps also that there be a finite number
    of local pictures describing the required relations.

    \item[(2)] The positive monoid in the standard Artin presentation
    consists of all positive braids. In the Birman--Ko--Lee presentation
    \cite{BKL}, the positive monoid is the strongly quasipositive
    braids. The positive monoid for the desired presentation would
    correspond to the quasipositive braids and would hopefully provide
    interesting algorithmic tools similar to Garside \cite{Gar69} for
    solving questions regarding quasipositivity. Similar infinite
    presentations hold for the mapping class groups of surfaces
    (Gervais \cite{Ger96}) as well as the Torelli group
    (Putman \cite{Put07}).
\end{enumerate}

\medskip

\noindent Proposed for K3 by: D.~Calegari

\noindent Scribed by: J.~Van Horn-Morris

\end{quote}

Our interpretation is that the above problem asks for a presentation satisfying three properties, which we will list below. We produce a presentation satisfying two of the properties in Theorem ~\ref{thm:pres12}, and we show that no presentation can satisfy all three in Theorem \ref{thm:nopres}. In Theorem \ref{thm:no-gars}, we provide a negative result related to remark (2): the quasipositive braid monoid admits no proper Garside family.

\textit{Quasipositive braids.}
Let $\cA_n$ be the set of isotopy classes of embedded arcs in the $n$-punctured disk $D_n$ with endpoints at distinct punctures. For $\al \in \cA_n$, let $\tau_\al\in B_n$ be the positive half-twist along $\al$. Then
\begin{equation*}
    \cH_n = \{\, \tau_\al \mid \al \in \cA_n \,\}
\end{equation*}
is the set of positive half-twists in $B_n$. The quasipositive braid monoid $QP_n$ is the monoid in $B_n$ generated by $\cH_n$. Its elements are called quasipositive braids. 

Quasipositive braids are of interest in low-dimensional topology due to the interesting properties of their closures. See the survey by Etnyre--Van Horn-Morris \cite{ETN15} for more details and for notable algorithmic problems about quasipositive braids. As mentioned in remark (2), Problem 2.9 hopes to attack some of these algorithmic problems using Garside theory. The survey by Birman--Brendle ~\cite{Birman-Brendle} explains how similar strategies have been useful for the positive braid monoid and the strongly quasipositive braid monoid. 

\textit{Formulation of the problem.}
Our first goal is to give a precise formulation of Problem 2.9. Let $\inn{\cH_n \mid R}$ be a presentation of $B_n$. The first property mentioned in remark (1) is the following.
\begin{enumerate}
    \item[\textit{(i)}]\label{item:prop1} Each relation in $R$ equates two positive words of equal length.
\end{enumerate}
Since the conjugate in $B_n$ of a positive half-twist is another positive half-twist, $B_n$ acts on $\cH_n$. Moreover, given a relation $u=v$ and a braid $g \in B_n$, we obtain a new relation
\begin{equation*}
  g\cdot u = g\cdot v.
\end{equation*}
One way to formalize the second property mentioned in remark (1) is the following.
\begin{enumerate}
  \item[\textit{(ii)}]\label{item:prop2} The relation set $R$ is the $B_n$-orbit of a finite set of relations.
\end{enumerate}

A group presentation in which all relations equate positive words also defines a monoid, called the positive monoid of the presentation. Suppose $\inn{\cH_n \mid R}$ satisfies property \textit{(i)}. The third property, given in remark (2), is the following.
  \begin{enumerate}
    \item[\textit{(iii)}]\label{item:prop3} The positive monoid of $\inn{\cH_n \mid R}$ is $QP_n$.
\end{enumerate}
While it is true that $\cH_n$ generates $QP_n$, it does not automatically follow that property \textit{(iii)} holds. Indeed, the positive monoid of a presentation does not necessarily embed into the group of the presentation.

We call a presentation of $B_n$ with generating set $\cH_n$ a positive half-twist presentation. Our precise formulation of Problem 2.9 is the following.
  \begin{problem29}
    Is there a positive half-twist presentation of $B_n$ satisfying properties (i), (ii), and (iii)?
  \end{problem29}

\textit{Answer to the problem.}
As mentioned above, a positive answer to Problem 2.9 would hopefully provide a presentation that one could use to attack algorithmic problems involving $QP_n$. Our first theorem is a negative answer to the problem.
\begin{theorem}\label{thm:nopres}
  For $n \geq 3$, there is no positive half-twist presentation of $B_n$ satisfying properties (i), (ii), and (iii).
\end{theorem}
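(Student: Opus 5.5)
Suppose, toward a contradiction, that $\inn{\cH_n \mid R}$ is a presentation of $B_n$ satisfying (i), (ii), (iii). The idea is to find a single relation that holds in $QP_n$ but cannot follow, in the positive monoid, from any finite family of $B_n$-orbits of length-preserving positive relations. Since all relations in $R$ preserve length, the positive monoid $M$ of the presentation is graded by word length. If $M = QP_n$, then every pair of positive words in $\cH_n$ representing the same braid in $B_n$ must be connected by a chain of elementary moves, each replacing a subword $g\cdot u$ by $g\cdot v$ for some $(u,v)$ in the finite list $R_0$ whose $B_n$-orbit is $R$. Let $N$ be the maximum length of the words in $R_0$. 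Each elementary move then only modifies a subword of length at most $N$.

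\emph{Choice of witnesses.} I will exhibit, for each $N$, two positive words of length greater than $N$ that are equal in $B_n$ and admit no nontrivial elementary move: every subword of length $\le N$ of either word is "rigid", meaning it is the unique positive word in $\cH_n$ representing its braid. Then no move applies, so the two words are not equal in $M$, which contradicts (iii). Length $2$ already rigidifies everything: two half-twists $\tau_\al\tau_\be$ are equal to another product $\tau_\ga\tau_\de$ only via Hurwitz-type moves, which are possible exactly when $\al,\be$ are disjoint or share an endpoint. For $n\ge 3$ I would therefore look for a family of long products $\tau_{\al_1}\cdots\tau_{\al_k}$ in which consecutive arcs intersect in their interiors, such that short subwords have unique factorizations, while the whole product admits a second factorization (for instance a cyclic rotation in which one factor is conjugated by the whole product, in the spirit of the identity $w\tau_\al = \tau_{w\cdot\al}w$).

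\emph{Concrete plan.} The cleanest source of such pairs is conjugation of a full word. Take $w = \tau_{\al_1}\cdots\tau_{\al_k}$ and note that $\tau_{\ga} w = w\,\tau_{w^{-1}\cdot\ga}$. Choose $\ga$ and $w$ so that, in the $3$-punctured disk (and then include it into $D_n$), the arcs $\ga,\al_1,\dots,\al_k,w^{-1}\cdot\ga$ are pairwise such that adjacent pairs intersect essentially. I would make this precise with a pseudo-Anosov-style word, for instance $w = (\si_1\si_2^{-1})^m$ conjugated into positive half-twists, using intersection numbers, which grow under iteration, to certify that no length-$\le N$ subword factors in more than one way.

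\emph{Main obstacle.} The hard step is proving rigidity of short subwords: showing that a product of $\ell\le N$ positive half-twists with the chosen arcs has no other positive factorization of length $\ell$. Length $2$ is handled by the observation above. For general $\ell$ I would try an invariant that is additive on positive factorizations, such as the exponent sum (which is forced anyway) together with an action on curves or arcs, for example the image of a fixed arc, to show that the factorization is determined step by step. If that fails, a fallback is to use property (ii) more strongly: only finitely many "shapes" of relation occur up to conjugation. I would then show that the relations of $QP_n$ need unboundedly many strands of interaction, measured for instance by the intersection number of the arcs involved, which no finite conjugation-closed family can supply.
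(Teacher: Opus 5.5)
Your main plan fails at its first step, because the rigidity condition you want for short subwords can never be met. For \emph{any} two arcs $\al,\be$, the identity $\tau_\al\tau_\be\tau_\al^{-1}=\tau_{\tau_\al(\be)}$ gives the Hurwitz relation $\tau_\al\tau_\be=\tau_{\ga}\tau_\al$ in $QP_n$, where $\ga=\tau_\al(\be)$. If $\al\neq\be$, these are different words, since they already differ in the second letter.

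So your claim that such moves are possible ``exactly when $\al,\be$ are disjoint or share an endpoint'' is false. Those are only the cases where the Hurwitz move happens to be a commutation or a triangle relation; the move itself exists no matter how $\al$ and $\be$ intersect. Your own identity $\tau_\ga w=w\,\tau_{w^{-1}\cdot\ga}$, applied to a one-letter $w$, is exactly this move.

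Consequently no subword $\tau_\al\tau_\be$ with $\al\neq\be$ is rigid in your sense. A word all of whose length-$2$ subwords are rigid is therefore a power $\tau_\al^k$ of a single half-twist, which leaves no room for two distinct witness words. The step you flag as the main obstacle is not hard but false: short products of half-twists along intersecting arcs never have unique positive factorizations. Relatedly, bounding the relations of $R$ by their \emph{length} is the wrong invariant. $QP_n$ has length-$2$ relations among arcs of arbitrarily large complexity, so length alone cannot separate $M$ from $QP_n$.

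What you need is rigidity in $M$, not in $QP_n$: a word to which no relation of $R$ applies, even though it has other factorizations as a braid. Your fallback points the right way, and made precise it is the paper's argument.

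\begin{enumerate}
\item Since $R=B_n\cdot R_0$ with $R_0$ finite, and geometric intersection number is $B_n$-invariant, there is an $N$ with the following property: any two distinct arcs whose half-twists occur in a common relation of $R$ have intersection number at most $N$.
\item For $n\ge 3$, choose arcs $\be_1,\dots,\be_k$ whose pairwise intersection numbers exceed $N$, and set $w=\tau_{\be_1}\cdots\tau_{\be_k}$.
\item No side of a relation of length at least $2$ can match a subword of $w$. Any such side has two adjacent letters that are equal or have intersection number at most $N$, while adjacent letters of $w$ are distinct with intersection number greater than $N$.
\item Relations of length $1$ are trivial, because distinct arcs give distinct half-twists. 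Hence $w$ is the only word in its class in $M$.
\item One Hurwitz move on the first two letters of $w$ gives a different word $w'$ with the same image in $QP_n$. So $M\to QP_n$ is not injective, contradicting (iii).
\end{enumerate}

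Only one rigid word is needed, not two. The second word comes from exactly the universal Hurwitz relation your criterion overlooked.
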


\textit{The Extended Birman--Ko--Lee presentation.}
Our next goal is to show that there is a positive half-twist presentation of $B_n$ that satisfies properties \textit{(i)} and \textit{(ii)} (but not \textit{(iii)}). The presentation we produce extends the well-known Birman--Ko--Lee (BKL) presentation \cite{BKL} of the braid group. 

We now describe the generators and relations of the classical BKL presentation. Arrange the punctures of $D_n$ horizontally. For each $1\leq i<j\leq n$, let $\si_{ij}$ be the positive half-twist along the arc that passes from puncture $i$ to puncture $j$ by going above all the punctures in between. Let $R_{far}$ be the $B_n$-orbit of the relation
\begin{equation*}
  \si_{12} \si_{34}=\si_{34}\si_{12}.
\end{equation*}
Let $R_{triangle}$ be the union of the $B_n$-orbits of the two relations
\begin{equation*}
  \si_{12} \si_{13}=\si_{13}\si_{23}=\si_{23}\si_{12}.
\end{equation*}
We call these sets $R_{far}$ and $R_{triangle}$ because the relations they contain are known as far commutativity and triangle relations, respectively. The classical BKL presentation has generators $\si_{ij}$, $1 \leq i<j \leq n$ and all the relations of $R_{far}$ and $R_{triangle}$ that use these generators. 

With the classical presentation in hand, we define the extended BKL presentation
\[\langle \cH_n \mid R_{far} \cup R_{triangle} \rangle.\]
While this presentation satisfies properties \textit{(i)} and \textit{(ii)}, it is not obvious that this is a presentation of $B_n$. Our second theorem says that it is.

\begin{theorem}\label{thm:pres12}
    The extended BKL presentation is a positive half-twist presentation of $B_n$ satisfying properties (i) and (ii).
\end{theorem}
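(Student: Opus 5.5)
Properties (i) and (ii) hold by construction. Every relation in $R_{far}\cup R_{triangle}$ equates two positive words of length two, and $B_n$-conjugation preserves this. Also, $R_{far}\cup R_{triangle}$ is by definition the union of the $B_n$-orbits of three relations. So the real content is that $G=\langle \cH_n \mid R_{far}\cup R_{triangle}\rangle$ is isomorphic to $B_n$ via the obvious map. My plan is to show that the natural map $\phi: G\to B_n$, sending the generator $\tau_\al$ to the half-twist $\tau_\al$, is a well-defined isomorphism.

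\emph{Well-definedness and surjectivity.} Every relation in $R_{far}\cup R_{triangle}$ is a $B_n$-conjugate of a relation that holds in $B_n$, so each holds in $B_n$ and $\phi$ is a homomorphism. It is surjective because its image contains the $\si_{ij}$, in particular the Artin generators $\si_{i,i+1}$.

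\emph{Injectivity.} By the BKL theorem, the subgroup $H\le G$ generated by the symbols $\si_{ij}$ satisfies all classical BKL relations, since these lie in $R_{far}\cup R_{triangle}$. Hence there is a homomorphism $\psi: B_n\to G$ sending $\si_{ij}\mapsto \si_{ij}$, with $\phi\circ\psi=\id$. It then suffices to show that $\psi$ is surjective, i.e.\ that every generator $\tau_\al$ of $G$ lies in $H$. Then $\psi\circ\phi$ is the identity on the generators of $G$, and $\phi$ is an isomorphism.

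\emph{Key step: each $\tau_\al$ lies in $H$.} The central observation is that the triangle relation $\tau_\be\tau_\ga=\tau_\ga\tau_{\be'}$ lets one conjugate generators inside $G$ itself. Indeed, it gives $\tau_\ga^{-1}\tau_\be\tau_\ga=\tau_{\be'}$ in $G$, where $\be'=\tau_\ga^{-1}(\be)$ is the image of the arc under the half-twist, and this holds whenever $\be,\ga$ share exactly one endpoint and are otherwise disjoint (these configurations are exactly the $B_n$-orbit of the triangle configuration). Similarly, far commutativity gives $\tau_\ga^{-1}\tau_\be\tau_\ga=\tau_\be=\tau_{\tau_\ga^{-1}(\be)}$ in $G$ for disjoint arcs. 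So in $G$ we have $g^{-1}\tau_\be g=\tau_{\phi(g)^{-1}(\be)}$ whenever $g$ is a standard generator $\si_{i,i+1}^{\pm1}$ and $\be$ is disjoint from, or in triangle position with, the arc of $\si_{i,i+1}$.

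The main obstacle is arcs $\be$ that cross the arc of $\si_{i,i+1}$ essentially, or that share both endpoints with it. To handle this I would argue by induction on the complexity of $\al$, measured by its minimal intersection number with the straight horizontal line through the punctures, and I would not conjugate arbitrary arcs. Every arc is $\phi(g)(\al_0)$ for some $g\in B_n$, where $\al_0$ is a straight arc. I would choose a sequence of standard generators that simplifies $\al$, and show at each stage that the relevant twist $\tau_{\si_{i,i+1}^{\pm1}(\al_k)}$ equals, in $G$, a conjugate of $\tau_{\al_k}$ by $\si_{i,i+1}^{\pm1}$. To do this I would use the fact that $\si_{i,i+1}$ itself factors in $H$ through triangle relations with arcs in good position. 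Concretely, I would try to show that the set $S=\{\al : \tau_\al\in H \text{ and } g^{-1}\tau_\al g=\tau_{\phi(g)^{-1}\al}\text{ in } G \ \forall g\in H\}$ contains the straight arcs and is closed under the triangle moves. Because the triangle relations hold in $G$ for all configurations, an arc-by-arc induction on intersection number should then exhaust $\cA_n$. I expect the delicate verification to be the case where the simplifying half-twist meets $\al$ in its interior, and I would look for an intermediate arc forming triangle configurations with both.
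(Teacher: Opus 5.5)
Your reduction matches the paper's. Properties (i) and (ii) are immediate, $\phi\circ\psi=\id$, and everything reduces to showing that every generator $\tau_\al$ of $G$ lies in $H=\psi(B_n)$. Your local observation is also essentially the paper's: a triangle relation writes each side as a conjugate of another side by the third. But the key step itself is not proved. What you give is a plan (``I would try to show'', ``should then exhaust''), and the obstacle you name---conjugating an arbitrary arc by a standard generator whose arc it crosses---is left open.

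The invariant $S$ you propose does not get around this. Closure of $S$ under triangle moves is fine, but the base case is circular. To check that a straight arc $\al_i$ lies in $S$ you need $g^{-1}\tau_{\al_i}g=\tau_{\phi(g)^{-1}(\al_i)}$ in $G$ for \emph{every} $g\in H$. Since $\phi(H)=B_n$ acts transitively on $\cA_n$, this already says every $\tau_\be$ lies in $H$, which is what you are trying to prove. Likewise, an induction on intersection number with the horizontal line would need a geometric lemma you have not supplied: every non-base arc is a side of a puncture-free triangle whose other two sides have strictly smaller intersection number.

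The missing idea is to choose the complexity so that the needed triangle comes for free. The paper takes $C(\al)$ to be the minimal Artin word length $|g|$ over all $g$ with $g(\al_i)=\al$ for some $i$. It then peels off the \emph{last} letter rather than the first: $g=h\si_j^{\ep}$ with $|h|=N-1$. Minimality forces $|i-j|=1$, since otherwise $\si_j^{\ep}(\al_i)=\al_i$. Hence $\al_i,\al_j,\si_j^{\ep}(\al_i)$ is the standard puncture-free triangle. Applying $h$, $\al$ forms a puncture-free triangle with $h(\al_i)$ and $h(\al_j)$, both of complexity at most $N-1$. Equivalently, $\tau_\al=\tau_{h(\al_j)}^{\ep}\,\tau_{h(\al_i)}\,\tau_{h(\al_j)}^{-\ep}$. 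You conjugate by the half-twist $\tau_{h(\al_j)}$, which is automatically in triangle position with $h(\al_i)$, rather than by a standard generator. So the bad configuration you were worried about never arises, and the two-out-of-three property plus induction on $C$ finishes the proof.
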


\begin{remark*}
\textnormal{One can also attempt to define a positive half-twist presentation extending the Artin presentation of the braid group. However, this quickly fails. The presentation would define an infinite rank Artin group. Such a group is infinitely generated and therefore cannot be the braid group.}
\end{remark*}

It follows from Theorem \ref{thm:nopres} that the positive monoid $M$ of the extended BKL presentation is not $QP_n$. In other words, the natural map $M \to QP_n$ is not injective. In order to use this presentation to study $QP_n$, one would have to understand the kernel of this map. The proof of Theorem \ref{thm:nopres} shows how to produce pairs of elements of $M$ of arbitrary length that map to the same element of $QP_n$.

\textit{Garside structures and Garside families.}
As discussed above, remark (2) suggests we try to study $QP_n$ with Garside-like techniques. The most natural question to ask is whether $QP_n$ admits a Garside structure. Garside structures were introduced by Garside \cite{Gar69} and generalized to Garside families by Dehornoy--Digne--Michel \cite{DehornoyDigneMichel}. See the survey by Birman--Brendle \cite{Birman-Brendle} for exposition on the role of Garside structures in studying the positive and strongly quasipositive monoids in the braid group. A Garside structure on a monoid $M$ consists of a special element $\Delta\in M$ such that the set of divisors $\Div(\Delta)$ satisfies certain properties. A Garside family is a subset $S \subset M$ that satisfies the same properties as $\Div(\Delta)$ but does not necessarily come with a special element of $M$. The following theorem says that there is no proper Garside family in $QP_n$ when $n \geq 3$. In particular, it admits no Garside structure.

\begin{theorem}\label{thm:no-gars}
    For $n \geq 3$, the only Garside family in $QP_n$ is $QP_n$.
\end{theorem}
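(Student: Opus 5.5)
The plan is to use the characterization of Garside families through heads. Let $S\subseteq QP_n$ be a Garside family. Since $QP_n$ sits inside $B_n$, it is cancellative. Its only invertible element is $1$, because the exponent-sum homomorphism $B_n\to\Z$ sends every element of $\cH_n$ to $1$. Consequently every nontrivial $g\in QP_n$ has an $S$-normal decomposition $g=s_1s_2\cdots s_k$. The first factor $s_1$ is then an $S$-head of $g$: every element of $S$ that left-divides $g$ also left-divides $s_1$.

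Two more standard facts will be used.
\begin{itemize}
\item $S$ contains every atom $\tau_\al\in\cH_n$, since atoms are indecomposable and $S$ generates $QP_n$.
\item If we establish that $s_1=g$ for every $g$, then every $g$ lies in $S$, and hence $S=QP_n$.
\end{itemize}
The whole proof therefore reduces to the following claim.

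\textbf{Claim.} For $n\ge3$ and any $g\in QP_n$, the only $h\in QP_n$ that left-divides $g$ and is left-divisible by every half-twist left-dividing $g$ is $h=g$.

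To attack the claim, the main tool is the Hurwitz move $\tau_\al\tau_\be=(\tau_\al\tau_\be\tau_\al^{-1})\tau_\al$. Applied repeatedly, it shows that any conjugate of any factor of $g$ by a suitable prefix can be moved to the front. Suppose $g=hg'$ with $g'\neq1$. Write $g'=\tau u$ with $\tau\in\cH_n$. Then
\[
g=(h\tau h^{-1})\,h\,u ,
\]
so the half-twist $h\tau h^{-1}$ left-divides $g$, and hence it left-divides $h$. The idea is to vary the choice of $\tau$ in order to get a contradiction. We can replace the decomposition of $g'$ by Hurwitz-equivalent ones, and we can also conjugate by further factors of $h$. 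This produces a large family of half-twists $\tau_\ga$, all of which must left-divide $h$. For any half-twists $\tau_\be,\tau_\ga$ that both left-divide $h$, the exponent sum forces $h\in\tau_\be QP_n\cap\tau_\ga QP_n$, with $h$ of fixed length.

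The strategy is to exhibit two half-twists $\tau_\be,\tau_\ga$ dividing $g$ whose arcs intersect in a way, possible only when $n\ge3$, such that any common right multiple of them has length at least $\operatorname{len}(h)+1$. Alternatively, one could show that $h$ would have to be left-divisible by the half-twists of infinitely many arcs, which is incompatible with its finite length. One way to do this is to use twists by $g'$ itself, i.e. iterate $\tau\mapsto (h\tau h^{-1})$ along powers, combined with a length or exponent-sum invariant on the set of arcs whose half-twists divide a fixed braid. This yields $g'=1$.

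The hypothesis $n\ge3$ enters exactly here. For $n=2$, $QP_2\cong\mathbb N$ has proper Garside families; this shows that the contradiction must come from the noncommutative Hurwitz action.

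The main obstacle is this last step, namely controlling which half-twists can left-divide a fixed quasipositive braid $h$. My first approach is to work with $g$ of small length and apply an induction. For $g=\tau_\al\tau_\be$, I would show directly that the head cannot be $\tau_\al$ alone. Indeed, both $\tau_\al$ and $\tau_\al\tau_\be\tau_\al^{-1}$ divide $g$. When these are distinct half-twists, their only common multiples of length $2$ are products equal to $g$, because left division in $B_n$ is unique; so the head is $g$. When $\tau_\al=\tau_\be$, I would use a different Hurwitz-equivalent factorization, or a third puncture, to find a second divisor. For general $g$, I would proceed by induction on length, using that the head of $hg'$ must absorb the left divisors created by moving factors of $g'$ forward. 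If this fails, the fallback is to find a specific braid whose $S$-head is forced to be non-unique, contradicting the Garside axioms.
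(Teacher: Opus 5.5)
\textbf{There is a genuine gap: your Claim is false.}

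Take $g=\tau_\alpha^2$. The only half-twist that left-divides $\tau_\alpha^2$ in $QP_n$ is $\tau_\alpha$ itself. So $h=\tau_\alpha$ satisfies the hypotheses of your Claim, yet $h\neq g$.

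For $n=3$ you can check this directly. Under the standard map $B_3\to SL_2(\mathbb{Z})$, whose kernel is $\langle\Delta^4\rangle$, a half-twist goes to $I+N_v$, where $N_v=\begin{pmatrix}-pq&p^2\\-q^2&pq\end{pmatrix}$ for a primitive $v=(p,q)$, and $\sigma_1\mapsto I+N_{e_1}$. If $\tau_\beta\tau_\gamma=\sigma_1^2$, then $\tau_\gamma\mapsto(I-N_v)(I+2N_{e_1})$. Its trace is $2+2q^2$, which must equal $2$. Hence $v=\pm e_1$, and comparing exponent sums gives $\tau_\beta=\sigma_1$.

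The same holds for general $n$. Half-twists are right-veering, so every factor of a positive factorization of $\tau_\alpha^k$ fixes each arc from $\partial D_n$ to a puncture off $\alpha$ that avoids a neighborhood of $\alpha$. Each factor therefore lies in $\langle\tau_\alpha\rangle$.

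Your proposed fixes for this case cannot work. The Hurwitz orbit of $(\tau_\alpha,\tau_\alpha)$ is a single point, and no third puncture produces a second half-twist divisor. Your alternative route is also unfounded: $\Delta^2$ has finite length, yet, being central, it is left-divisible by every half-twist.

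\textbf{What is missing.} You only use two consequences of the Garside property: atoms lie in $S$, and $S$-heads exist. These cannot force $x^2\in S$. If $x^2\notin S$, the elements of $S$ dividing $x^2$ are $1$ and $x$, and $x$ is a perfectly good $S$-head. Below a pure power $x^k$, the divisibility structure of $QP_n$ is that of $\mathbb{N}$, exactly as in $QP_2$. Two ingredients are needed.

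First, induct on length and let the head absorb all shorter elements of $S$, not just atoms. Suppose $g=uxyv$ with distinct $x,y\in\cH_n$. The distinct length-$(k-1)$ elements $uxv$ and $uyv$ both left-divide $g$, because $g=uxv\cdot v^{-1}yv=uyv\cdot v^{-1}y^{-1}xyv$. This forces $g\in S$. The paper does this via closure under right-comultiples (Lemma~\ref{lem:right comultiple ind condition}); your head formulation works equally well here.

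Second, for $g=x^k$ you must leave the divisors of $x^k$ and use closure of $S$ under right divisors, which your argument never invokes.
\begin{enumerate}
\item Choose a triangle relation $yx=zy$. This is where $n\ge 3$ actually enters.
\item The distinct length-$k$ elements $x^{k-1}y$ and $x^{k-1}z$ lie in $S$ by the first case.
\item Both left-divide $w=x^{k-1}yx=x^{k-1}zy$, which has length $k+1$, so $w\in S$.
\item Finally, $x^k$ is a right divisor of $w=(x^{k-1}yx^{-(k-1)})\,x^k$, so $x^k\in S$.
\end{enumerate}
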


\textit{Acknowledgements.}
I wish to thank my advisor, Dan Margalit, for many helpful discussions about this project. I would also like to thank John Etnyre and Danny Calegari for helpful discussions. The author was partially supported by the Department of Education through the Graduate Assistance in Areas of National Need (GAANN) fellowhsip.

\textit{AI statement.}
All of the proofs in this paper were completed in conversation with ChatGPT. As discussed above, the precise formulation of Problem 2.9 was left open to interpretation. The author considered various approaches and directed ChatGPT to find proofs of potential lemmas. Ultimately we settled on Theorems \ref{thm:nopres} and \ref{thm:pres12}. 

Theorem \ref{thm:no-gars} does not address Problem 2.9 directly, but certainly addresses remark (2). At the direction of the author, ChatGPT found various generalizations of Garside structures, and these were discussed until the author decided that ``Garside family" was the best notion to rule out for $QP_n$. The proof of Theorem \ref{thm:no-gars} was found by ChatGPT.

The exposition is due to the author. This includes identifying overcomplications in the initial proofs and bringing the intuition behind the proofs to the forefront.
\section{No ``Nice" Presentation}\label{sec:no-nice}

In this section we prove Theorem \ref{thm:nopres}. Let 
\begin{equation*}
    G= \inn{\cH_n \mid R}
\end{equation*}
be a presentation of $B_n$. Let $M$ be the positive monoid of this presentation. We will prove that if the presentation satisfies properties \textit{(i)} and \textit{(ii)}, then its positive monoid $M$ is not $QP_n$. More specifically, we show that the natural map $M \to QP_n$ is not injective.

Suppose our presentation satisfies property \textit{(i)}. Then all words representing a given element $x$ of $M$ have the same length, which we call the \textit{length} of $x$. We call a relation equating words of length $k$ a \textit{length $k$ relation}. The key insight is that length $k$ words can only be equated to other length $k$ words via relations of length at most $k$. We will prove the following lemma about $M$ and then show that this obstructs property \textit{(iii)}. 

\begin{lemma}\label{lem:prop12thenlengthk}
    Suppose $\inn{\cH_n \mid R}$ satisfies properties \textit{(i)} and \textit{(ii)}. Then for every $k$, there are length $k$ elements of $M$ that have a unique word representative.
\end{lemma}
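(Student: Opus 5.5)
Under properties (i) and (ii), the relation set $R$ is the $B_n$-orbit of finitely many relations, and each relation equates two positive words of a common length. So there is a maximal relation length $L$. For any fixed word $w=\tau_{\al_1}\cdots\tau_{\al_k}$, two words represent the same element of $M$ exactly when they are joined by a chain of elementary moves, each replacing a subword $u$ by $v$ for some $(u=v)\in R$ of length at most $k$. It therefore suffices, for each $k$, to find a length $k$ word to which \emph{no} relation of $R$ applies. That is, for no $j\le \min(k,L)$ may a consecutive subword of length $j$ equal a side of a relation in $R$. Such a word has a unique word representative, since every elementary move must act on some subword.

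The plan is to control which consecutive $j$-tuples of half-twists occur as sides of relations in $R$, using the $B_n$-action. Call a word \emph{relational} if it is one side of some relation in $R$. Since $R$ is the union of finitely many orbits $B_n\cdot(u_i=v_i)$, every relational word of length $j$ has the form $g\cdot u$, where $u$ is one of finitely many words $w_1,\dots,w_N$ (the sides of the finitely many base relations) and $g\in B_n$. The key invariant is the $B_n$-orbit of the tuple of arcs $(\al_1,\dots,\al_j)$ under the diagonal action on $\cA_n^j$; for $j=2$, a finer invariant is the geometric intersection number $i(\al_1,\al_2)$. Only finitely many diagonal orbits of tuples are relational. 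So I want a word in which every consecutive subword of length $j\le L$ has arc-tuple in none of these finitely many orbits.

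For the construction, I would take $n\ge 3$ (for $n=2$ the set $\cH_2$ is a single generator, the monoid is free, and the claim is trivial). The consecutive pair invariant $i(\al_1,\al_2)$ is $B_n$-invariant and takes infinitely many values, while only finitely many values $c_1,\dots,c_m$ occur among consecutive pairs in the base words $w_1,\dots,w_N$. Hence every relational word of length $\ge 2$ contains a consecutive pair with intersection number in $\{c_1,\dots,c_m\}$. Choose $C$ larger than all $c_i$. I will build arcs $\al_1,\dots,\al_k$ with $i(\al_t,\al_{t+1})>C$ for all $t$, for instance by alternating two arcs $\al,\be$ with $i(\al,\be)>C$, obtained by applying a high power of a half-twist to an arc sharing a puncture. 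Then no subword of length $\ge2$ is relational.

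It remains to handle length $1$ relations. By (i), a relation $\tau_\al=\tau_\be$ would be trivial in $B_n$ unless $\al=\be$, since distinct arcs give distinct half-twists, so these relations never change the word. The main obstacle is making precise that intersection number, or another orbit invariant, is really well defined on consecutive letters and uniform across the finitely many base relations, including degenerate cases where arcs share endpoints. If intersection numbers prove awkward, I would instead use the finiteness of $B_n$-orbits on pairs $\{(\al,\be)\}$ appearing in base words, together with the infinitude of diagonal orbits of pairs of arcs, to choose a pair orbit avoided by all base words.
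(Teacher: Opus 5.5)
Your proposal is correct and follows essentially the same route as the paper, which also bounds the geometric intersection numbers of pairs of arcs appearing in the finitely many base relations by a $B_n$-invariant constant $N$ and then takes a word $\tau_{\beta_1}\cdots\tau_{\beta_k}$ whose arcs have pairwise intersection numbers exceeding $N$, so that no side of a relation of length at least $2$ can occur as a subword. Your version differs only in minor ways: it needs the bound only on consecutive pairs, so an alternating word in two arcs suffices, and it explicitly disposes of length-one relations and the case $n=2$, which the paper leaves implicit.
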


\begin{proof}
    Suppose $R=B_n \cdot R_0$ for some finite set $R_0$ of relations. Let $\al_1,\dotsc,\al_m$ be the arcs whose positive half-twists appear in $R_0$. There is a maximum $N$ of the geometric intersection number of any pair $\al_i$ and $\al_j$.
    
    Choose arcs $\beta_1,\dotsc,\beta_k$ with pairwise geometric intersection numbers all larger than $N$. Then no relation can be used to relate $\tau_{\beta_1}\dots \tau_{\beta_k}$ to an equivalent word, so it is unique in its word equivalence class.
\end{proof}

\begin{proof}[Proof of Theorem \ref{thm:nopres}]
  By Lemma \ref{lem:prop12thenlengthk}, there is some element $g\in M$ with a unique word representative $w$. We will produce an element $g'\in M$ such that $g$ and $g'$ map to the same quasipositive braid under the natural map $M\to QP_n$. 
  
  Let $\al,\beta\in\cA_n$ be any pair of arcs. In $B_n$, we have the relation
  \begin{equation*}
    \tau_{\al} \tau_{\beta} \tau_{\al}^{-1}= \tau_{\ga},
  \end{equation*}
  where $\ga=\tau_\al(\beta)$. We can rewrite this as a relation in $QP_n$.
  \begin{equation*}
    \tau_{\al} \tau_{\beta}=\tau_{\ga}\tau_{\al}.
  \end{equation*}
  Let $w'$ be obtained from $w$ by applying this relation to a pair of adjacent generators in $w$. Let $g'=[w']\in M$. Then $g$ and $g'$ are distinct elements of $M$ that map to the same quasipositive braid in $QP_n$.  
\end{proof}
\section{A Positive Half-Twist Presentation}\label{sec:Presentation}

In this section, we prove Theorem \ref{thm:pres12}. Let
\begin{equation*}
  G= \inn{\cH_n \mid R_{far} \cup R_{triangle}}.
\end{equation*}
To avoid confusion, we use the generators $x_{\al}$ for elements of $G$ and $\tau_\al$ for elements of $B_n$. We define a map $\psi : B_n \to G$ as follows. For each $1 \leq i<j\leq n$, the BKL generator $\si_{ij}$ is the half-twist along some arc $\al_{ij}$. In other words, 
    \begin{equation*}
      \si_{ij}=\tau_{\al_{ij}}.
    \end{equation*}
    We define $\psi$ using the BKL presentation of $B_n$. For each generator $\si_{ij}$, we set
\begin{equation*}
  \psi(\si_{ij})=x_{\al_{ij}}.
\end{equation*}
This is a homomorphism since the triangle and far commutativity relations hold in $G$. 

In the reverse direction we define $\phi : G \to B_n$ by sending $x_{\al}$ to $\tau_\al$ for every $\al \in \cA_n$. This is a homomorphism since the triangle and far commutativity relations hold in $B_n$. 

At this point we know that 
\begin{equation*}
  \phi \circ \psi (\si_{ij})=\si_{ij}
\end{equation*}
for each BKL generator $\si_{ij}$. In other words, $\phi \circ \psi$ is the identity on $B_n$, and this implies $\psi$ is injective. However, we do not know that $\psi \circ \phi$ is the identity on $G$. Since the only generating set we know of for $G$ is $\cH_n$, asserting that $\psi\circ\phi$ is the identity would require showing that $\psi(\tau_{\al})=x_{\al}$ for every $\al \in \cA_n$. However, we only know this to be true for $\al=\al_{ij}$. To complete the proof, we will prove the equivalent statement that $\psi$ is surjective. The two key ingredients of the proof are an observation about the triangle relation and a notion of complexity for arcs.

\textit{The triangle relation.}
The first observation is that if two of the generators involved in a triangle relation lie in $\psi(B_n)$, then so does the third. Indeed, from the triangle relations
\begin{equation*}
  x_{\al}x_{\beta}=x_{\beta}x_{\ga}=x_{\ga}x_{\al}
\end{equation*}
one obtains
\begin{equation*}
  x_{\al}=x_{\beta}x_{\ga}x_{\beta}^{-1}, \qquad x_{\beta}=x_{\ga}x_{\al}x_{\ga}^{-1}, \qquad x_{\ga}=x_{\beta}^{-1}x_{\al}x_{\beta}.
\end{equation*}
Moreover, generators $x_{\al}$, $x_{\beta}$, and $x_{\ga}$ are part of a triangle relation if and only if the arcs $\al$, $\beta$, and $\ga$ form a triangle on $D_n$ with no interior punctures.

\textit{Complexity of an arc.}
We will define the complexity of an arc in terms of a notion of complexity for braids. First, we need to fix a presentation of $B_n$. We will use the Artin presentation, which has generators 
\begin{equation*}
  \si_i=\si_{i,i+1}
\end{equation*}
for $i=1,\dotsc,n-1$. See \cite{Birman-Brendle} for the relations. For $g \in B_n$, we denote by $|g|$ the shortest word length of $g$ with respect to the Artin presentation. For each $i=1,\dotsc,n-1$, let $\al_i$ be the straight arc between punctures $i$ and $i+1$. We are now ready to define our complexity for arcs.

Let $\al\in \cA_n$. Its complexity $C(\al)$ is the minimum of $|g|$ over all $g\in B_n$ such that $g(\al_i)=\al$ for some $i$.

\begin{proof}[Proof of Theorem \ref{thm:pres12}]
  Recall that we already observed that $\psi$ is an injective homomorphism. We will complete the proof by showing that $\psi$ is surjective. Let $\al \in \cA_n$. We will show that $x_\al\in \psi(B_n)$. We proceed by induction on $N=C(\al)$. 

  The base case is $N=0$. Since $C(\al)=0$, we must have $\al=\al_i$ for some $i$. By definition of $\psi$, we have $x_{\al}\in\psi(B_n)$. 
  
  Having completed the base case, we proceed with the inductive step. Suppose $x_{\al'}\in\psi(B_n)$ whenever $C(\al')<N$. We will show that $\al$ forms a triangle with no interior punctures with two arcs of smaller complexity. By the above observation about the triangle relations, this will complete the proof.

  Since $C(\al)=N$, there is some $g$ with $|g|=N$ and some $i$ such that
  \begin{equation*}
    \al=g(\al_i).
  \end{equation*}
  Since $|g|=N$, there is some $h$ with $|h|=N-1$ such that 
  \begin{equation*}
    g=h\si_j^{\ep},
  \end{equation*}
  where $\ep=\pm 1$. Moreover, we have $|i-j|=1$ since otherwise we would have 
  \begin{equation*}
    \si_j^{\ep}(\al_i)=\al_i,
  \end{equation*}
  and this would imply that $C(\al)<N$. 

  When $|i-j|=1$, the arcs $\al_i$, $\al_j$, and $\si_j^{\ep}(\al_i)$ form a triangle with no interior punctures. Applying $h$, we see that $\beta=h(\al_i)$, $\ga=h(\al_j)$, and $\al=g(\al_i)$ form a triangle with no interior punctures. Since $|h|=N-1$, we have that $C(\beta)$ and $C(\ga)$ are less than $N$. We have therefore proven that $\al$ forms a triangle with no interior punctures with two arcs of smaller complexity. This completes the proof.
\end{proof}

\section{No Proper Garside Families}\label{sec:NoGars}

In this section we address the possibility of using Garside theoretic techniques to study $QP_n$. We prove Theorem \ref{thm:no-gars}, which is a negative result in this direction. 

\textit{Characterizing Garside Families.}
Garside families are special subsets of a monoid. Rather than define them, we give a characterization due to Dehornoy \cite{DehornoySurvey} in terms of closure properties. We will show that if $E$ is any Garside family, then these closure properties imply $E=QP_n$. In order to give the characterization, we need to give some definitions.

For $a,b\in M$, we say that $a$ is a \textit{left divisor} of $b$ and write $a\leq b$ if there is some $c\in M$ such that
\begin{equation*}
    b=ac.
\end{equation*}
We write $a < b$ when $a \leq b$ but $a \neq b$. We define \textit{right divisors} similarly, but the notation $\leq$ and $<$ will always be for left divisors. We say $E \subset M$ is closed under right divisors if whenever $b\in E$, every right divisor $a$ of $b$ also belongs to $E$.

We say that a subset $E\subset M$ is \textit{closed under right-comultiples} if for any $s,t\in E$, if $s\le g$ and $t\le g$ for some $g\in M$, then there is some $r\in E$ such that
\begin{equation*}
    s,t\le r\le g.    
\end{equation*}

The next notion is that of a left-cancellative, right-Noetherian monoid. Every submonoid of a group, in particular $QP_n$, is left-cancellative. A left-cancellative monoid is called right-Noetherian if there are no infinite, bounded ascending sequences with respect to strict left-divisibility. That is, there is no infinite sequence of the form
\begin{equation*}
  g_1 < g_2 < g_3<\dots < g.
\end{equation*}
To prove $QP_n$ is right-Noetherian, we introduce the length homomorphism. There is a monoid homomorphism $\ell : QP_n \to \Z_{\geq0}$ defined by $\ell(\tau_\al)=1$ for any $\al \in \cA_n$. For $g \in QP_n$, we call $\ell(g)$ the \textit{length} of $g$. It follows that if $a,b \in QP_n$ satisfy $a < b$, then $\ell(a)< \ell(b)$. Since $\ell(g)$ is finite for all $g\in QP_n$, $QP_n$ is right-Noetherian.

Finally, we can give the characterization of Garside families. Let $M$ be a left-cancellative, right-Noetherian monoid with no invertible elements except $1$. Let $E \subset M$. Dehornoy proved \cite[Corollary 3.19]{DehornoySurvey} that $E$ is a Garside family if and only if it satisfies the following properties. 
\begin{enumerate}[label=(\alph*)]
  \item $1 \in E$, 
  \item $E$ generates $M$, 
  \item $E$ is closed under right-divisors, 
  \item $E$ is closed under right-comultiples.
\end{enumerate}

These are satisfied when $E=M$. In particular, $QP_n$ is a Garside family in $QP_n$. It remains to show that it is the only Garside family in $QP_n$. The following lemma will allow us to use the right-comultiple condition to inductively place all elements of $QP_n$ into $E$ according to their length.
\begin{lemma}\label{lem:right comultiple ind condition}
  Suppose $E \subset QP_n$ is closed under right-comultiples. Let $s$ and $t$ be distinct elements of $E$ with $\ell(s)=\ell(t)=d$. Suppose $g$ is a common right multiple of $s$ and $t$ and $\ell(g)=d+1$. Then $g \in E$.
\end{lemma}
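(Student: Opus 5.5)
The argument applies the right-comultiple condition directly to $s$ and $t$ and then uses the length homomorphism $\ell$ to pin down the element it produces. Since $E$ is closed under right-comultiples and $s,t\in E$ satisfy $s\le g$ and $t\le g$, there is some $r\in E$ with
\begin{equation*}
  s\le r\le g \quad\text{and}\quad t\le r\le g.
\end{equation*}
It then suffices to show that $r=g$.

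We compare lengths. Recall the earlier observation that if $a<b$ in $QP_n$ then $\ell(a)<\ell(b)$; it follows because $b=ac$ with $c\neq 1$ forces $\ell(c)\ge 1$, as every nontrivial element of $QP_n$ is a product of at least one half-twist. From $s\le r$ we get $\ell(r)\ge \ell(s)=d$, and from $r\le g$ we get $\ell(r)\le \ell(g)=d+1$. So $\ell(r)\in\{d,d+1\}$.

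Suppose $\ell(r)=d$. Write $r=sc$ with $c\in QP_n$. Then $\ell(c)=0$, which forces $c=1$: the only length $0$ element of $QP_n$ is the empty product, since each generator has length $1$. Hence $r=s$. The same argument applied to $t\le r$ gives $r=t$, so $s=t$, contradicting the assumption that $s$ and $t$ are distinct. Therefore $\ell(r)=d+1=\ell(g)$. Writing $g=rc'$, we get $\ell(c')=0$, so $c'=1$ and $g=r\in E$.

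No step here is a real obstacle. The only point that needs care is that length-zero elements of $QP_n$ are trivial, so that equal length together with divisibility forces equality. This holds because $\ell$ is well defined on $QP_n$ (for instance via the abelianization $B_n\to\Z$, which sends every half-twist to $1$) and because $QP_n$ is generated by $\cH_n$.
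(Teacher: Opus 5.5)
Your proof is correct and follows the paper's argument exactly: you apply closure under right-comultiples to get $r\in E$ with $s,t\le r\le g$, then use the length homomorphism to rule out $\ell(r)=d$ (which would force $s=r=t$) and conclude $r=g$. The only difference is that you also justify two points the paper leaves implicit, namely that $\ell$ is well defined and that length-zero elements of $QP_n$ are trivial.
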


\begin{proof}
  Since $g$ is a common right multiple of $s$ and $t$ and $E$ is closed under right-comultiples, there is some $r \in E$ such that
    \begin{equation*}
      s, t \leq r \leq g.
    \end{equation*}
    Since $s$ and $t$ are distinct, we cannot have $\ell(r)=d$. Otherwise, we would have $s=t=r$. Therefore we must have $\ell(r)=d+1$ and $r=g$. Since $r\in E$, this proves $g \in E$. 
\end{proof}

\begin{proof}[Proof of Theorem \ref{thm:no-gars}.]
  Let $E \subset QP_n$ be a Garside family. Let $g \in QP_n$. We will show by induction on its length $\ell(g)$ that $g\in E$, and this will complete the proof. The base cases are lengths 0 and ~1. Suppose $g\in QP_n$ with $\ell(g)=0$. Then $g$ must be the identity, which lies in $E$ due to item (a) of the characterization. 

The next base case is $\ell(g)=1$. Then $g=\tau_{\al}$ for some $\al\in\cA_n$. Since $E$ generates $QP_n$ by item (b), there are elements $s_1,\dotsc, s_r\in E$ such that
\begin{equation*}
  \tau_\al = s_1\dots s_r.
\end{equation*}
Applying the length homomorphism we obtain
\begin{equation*}
  1=\ell(s_1)+\dots + \ell(s_r).
\end{equation*}
Since $\ell$ is nonnegative, we have $\ell(s_i)=1$ for some $i$ and $\ell(s_j)=0$ for $j\neq i$. Therefore we have
\begin{equation*}
  \tau_{\al}=s_i.
\end{equation*}
Since $s_i\in E$, this proves $\tau_{\al}\in E$.

Next we carry out the inductive step. Let $g\in QP_n$ have length $k \geq 2$. Assume all length $k-1$ elements of $QP_n$ lie in $E$. We will prove $g \in E$.

There are two cases, one of which must be proven before the other. The first case is that there are distinct $x,y \in \cH_n$ such that
\begin{equation*}
  g = u x y v
\end{equation*}
with $u,v \in QP_n$. The second case is that $g=x^k$ for some $x\in\cH_n$. We begin with the former case. Let 
\begin{equation*}
  s= uxv, \qquad t = uyv.
\end{equation*}
Moreover, $s$ and $t$ are distinct by cancellativity. By induction, we have $s,t \in E$. We have
\begin{equation*}
  g=s(v^{-1}yv)=t(v^{-1}y^{-1}xyv).
\end{equation*}
Since $v^{-1}yv$ and $v^{-1}y^{-1}xyv$ are positive half-twists, $g$ has length one larger than $s$ and $t$. Since $g$ is also a common right multiple of $s$ and $t$, we can apply Lemma \ref{lem:right comultiple ind condition} and we obtain $g \in E$, as desired. 

Now we consider the case that $g = x^k$ for some $x \in \cH_n$. There is some arc $\al$ such that $x=\tau_\al$. Choose arcs $\beta$ and $\ga$ such that $\al$, $\beta$, and $\ga$ form a triangle. Let $y=\tau_\beta$ and $z=\tau_\ga$. Then the triangle relation gives
\begin{equation*}
  yx=zy.
\end{equation*}
Now we set
\begin{equation*}
  w=x^{k-1}yx = x^{k-1}zy.
\end{equation*}
Then $w$ has length $k+1$ and is a common right multiple of $x^{k-1}y$ and $x^{k-1}z$, which are distinct and of length $k$. Since $x$, $y$, and $z$ are distinct, the length $k$ elements $x^{k-1}y$ and $x^{k-1}z$ belong to the previous case and therefore lie in $E$. By Lemma \ref{lem:right comultiple ind condition}, we get $w \in E$. Finally, we notice that
\begin{equation*}
  w= \left(x^{k-1}y x^{-(k-1)}\right) x^k.
\end{equation*}
Since $E$ is closed under right divisors and $w \in E$, we have $x^k \in E$, as desired.

By induction, we have proven that $E=QP_n$. This completes the proof.
\end{proof}

\bibliographystyle{plain}
\bibliography{template}

\end{document}